\title{Computing the Chern-Schwartz-MacPherson Class of Complete Simplical Toric Varieties}
\author{
        Martin Helmer  \\ \normalsize
       Department of Mathematics,\\ \normalsize
 University of California, Berkeley,\\ \normalsize
  Berkeley, California,USA \\ \normalsize
94720-3840 \\ \normalsize
  \texttt{martin.helmer@berkeley.edu}
  }
\date{\today}
 \definecolor{Ftitle}{RGB}{11,46,108}
\definecolor{line}{RGB}{87,39,117}
\colorlet{tableheadcolor}{Ftitle!25} 
\colorlet{tablerowcolor}{gray!10} 
\newcommand{\CC}{\mathbb{C}}
\newcommand{\ZZ}{ \mathbb{Z}}
\newcommand{\QQ}{ \mathbb{Q}}
\newcommand{\pp}{\mathbb{P}}
\definecolor{mypurple}{HTML}{5B1280}
\newtheorem{theorem}{Theorem}[section]
\newtheorem{propn}[theorem]{Proposition}
\newtheorem{lemma}[theorem]{Lemma}
\newtheorem{defn}[theorem]{Definition}
\newtheorem{algorithm}{Algorithm}
\newtheorem{example}[theorem]{Example}
\newcommand{\RR}{ \mathbb{R}}
\begin{document}
\maketitle
\begin{abstract}
Topological invariants such as characteristic classes are an important tool to aid in understanding and categorizing the structure and properties of algebraic varieties. In this note we consider the problem of computing a particular characteristic class, the Chern-Schwartz-MacPherson class, of a complete simplicial toric variety $X_{\Sigma}$ defined by a fan ${\Sigma}$ from the combinatorial data contained in the fan $\Sigma$. Specifically, we give an effective combinatorial algorithm to compute the Chern-Schwartz-MacPherson class of $X_{\Sigma}$, in the Chow ring (or rational Chow ring) of $X_{\Sigma}$. This method is formulated by combining, and when necessary modifying, several known results from the literature and is implemented in Macaulay2 for test purposes.
\end{abstract}

\section{Introduction and Background}
The Chern-Schwartz-MacPherson ($c_{SM}$) class is a generalization of the total Chern class, that is the Chern class of the tangent bundle, to singular varieties. Unlike other generalizations of the Chern class to the singular setting the $c_{SM}$ class maintains many of the functorial properties of the total Chern class, and in particular maintains the relation to the Euler characteristic. This means, explicitly, that as with the Chern class the $c_{SM}$ class contains the Euler characteristic as the degree of its zero dimensional component, this relationship is discussed in more detail in \S\ref{subsection:Problem}. 

Historically the existence of a functorial theory of Chern classes for singular varieties in terms of a natural transformation from
the functor of constructible functions to some nice homology theory, and its relation to the Euler characteristic, was conjectured by Deligne and Grothendieck in the 1960's. In the 1974 article~\cite{macpherson1974chern}, MacPherson proved the existence of such a transformation, introducing a new notion of Chern classes for singular algebraic varieties. Independently in the 1960's Schwartz \cite{schwartz1965classes} defined a theory of Chern classes for singular varieties in relative cohomology. It was later shown in a paper of Brasselet and Schwartz \cite{brasselet1981classes} that these two different notions were in fact equivalent. This construction is now commonly referred to as the Chern-Schwartz-MacPherson class. 

In this note we present Algorithm \ref{algorithm:toricAlg} which computes the Chern-Schwartz-MacPherson class and/or the Euler characteristic of a complete simplicial toric variety $X_{\Sigma}$ defined by a fan $\Sigma$. The algorithm is based on a result of Barthel, Brasselet and Fieseler \cite{barthel1992classes} which gives an expression for the $c_{SM}$ class of a toric variety in terms of torus orbit closures. Note that, for simplicity, we will only consider toric varieties $X_{\Sigma}$ over $\CC$.

From a computational point of view the problem of calculating the $c_{SM}$ class for subschemes $V$ of $\pp^n$ has been considered by Aluffi in \cite{aluffi2003computing}, by Jost in \cite{jost2013algorithm} and by the author of this note in \cite{helmer2015,helmerTCS}; this problem has also been considered for subschemes of some smooth complete toric varieties by the author in \cite{helmer2015algorithm}. All of these algorithms have at their core the need to solve polynomial systems of varying difficulty; for example by means of Gr\"obner bases calculations or polynomial homotopy continuation. As such the running times of all such algorithms are dependent on the algebraic degrees of the defining equations of $V$ and on other algebraic properties of the defining equations. Given the often substantial computational cost of solving polynomial systems we believe that an approach to computing $c_{SM}$ classes which is strictly combinatorial in nature is desirable in settings where this is possible, such as the toric setting considered here. 

We note that the restriction to complete simplicial toric varieties is not required in the statement of the result of Barthel, Brasselet and Fieseler \cite{barthel1992classes} on which our algorithm is based, indeed these restrictions are present on the algorithm only for the purpose of simplifying the construction of the Chow ring of the toric variety. If one was able to construct the Chow ring in a simple manner with the restrictions removed the algorithm could be applied unchanged in this more general setting.   

The Macaulay2 \cite{M2} implementation of our algorithm for computing the $c_{SM}$ class and Euler characteristic of a complete simplicial toric variety presented in this note can be found at \url{https://github.com/Martin-Helmer/char-class-calc}. This implementation is accessed via the ``CharToric" package. Note that this implementation is also available in the github version of the ``CharacteristicClasses" Macaulay2 package, see \newline \scriptsize \url{https://github.com/Macaulay2/M2/blob/master/M2/Macaulay2/packages/CharacteristicClasses.m2},\normalsize \newline and will be included in the next release of Macaulay2.  

\begin{example}
Let $\mathcal{H}_r$ denote the $r$-th Hirzebruch surface (see, for example, Cox, Little, Schenck \cite[Example 3.1.16]{david2011toric}). Taking $r=5$ and letting $R=\CC[x_0,x_1,x_2,x_3,x_4]$ be the total coordinate ring of the toric variety $\mathcal{H}_r$ we have that \begin{equation}
c_{SM}(\mathcal{H}_r)=4x_1x_2+2x_1+7x_2+1 \in A^*(\mathcal{H}_r),
\end{equation}where $A^*(\mathcal{H}_r)$ is the Chow ring of $\mathcal{H}_r$. We may write this as \begin{equation}
A^*(\mathcal{H}_r) \cong \ZZ[x_0,x_1,x_2,x_3,x_4]/(x_0x_2,x_1x_3,x_0-x_2,-x_3+x_1+5x_2).
\end{equation}From this we deduce that the Euler characteristic is $$
\chi(\mathcal{H}_r)=\int c_{SM}(\mathcal{H}_r)=4 ,
$$ where $\int \alpha$ denotes the degree of the zero dimensional part of the cycle class $\alpha$ in some Chow ring (which is the coefficient of $x_1x_2$ in this case). Note that the Euler characteristic could also be obtained directly as the number of $2$-dimensional cones in the fan corresponding to the toric variety $\mathcal{H}_r$ via Theorem 12.3.9 of Cox, Little and Shanek \cite{david2011toric}. 
\end{example}

The content of this note will be organized as follows. In \S\ref{section:ChowRingToric} we will establish the setting for this work and review the construction of the rational Chow ring of a complete and simplicial toric variety. In \S\ref{subsection:Problem} we will state the problem and briefly review the definition of the $c_{SM}$ class. We then review relevant related results in \S\ref{section:csmToric}. In \S\ref{section:AlgNPreform} we detail the construction of our algorithm for computing the $c_{SM}$ class in the setting considered here. The problem of computing the multiplicity of a cone in an explicit manner is considered in \S\ref{subsection:Multiplcites}. Our algorithm for computing $c_{SM}$ classes (Algorithm \ref{algorithm:toricAlg}), along with the results of some performance testing of Algorithm \ref{algorithm:toricAlg} is given in \S\ref{subsection:Algorithm}.

\subsection{Setting and Notation} \label{section:ChowRingToric}
Let $X_{\Sigma}$ be an $n$-dimensional complete and simplicial toric variety defined by a fan $\Sigma$. Similar to the construction of the Chow ring in the smooth case we may construct the Chow ring of $X_{\Sigma}$ from the Chow groups, that is the groups $A^j(X_{\Sigma})$ of codimension $j$-cycles on $X_{\Sigma}$ modulo rational equivalence. The only difference in this case will be that we work over the rational number field $\mathbb{Q}$ rather than the integers. 

Using the definition of the intersection product on rational cycles (see \S{12.5} of \cite{david2011toric}) we have that the rational Chow ring of $X_{\Sigma} $ is given by the graded ring \begin{equation}
A^*(X_{\Sigma})_{\QQ}=A^*(X_{\Sigma}) \otimes_{\ZZ} \QQ =\bigoplus_{j=0}^n A^j(X_{\Sigma}) \otimes_{\ZZ} \QQ. 
\end{equation}

For each cone $\sigma$ in the fan $\Sigma$ the orbit closure $V(\sigma)$ is a subvariety of codimension $\dim(\sigma)$. We will write $[V(\sigma)]$ for the rational equivalence class of $V(\sigma)$ in $A^{\dim(\sigma)}(X_{\Sigma})$.

For convenience of notation we will also write $A_{\ell}(X_{\Sigma})$ for the dimension $\ell$-cycles on $X_{\Sigma}$ modulo rational equivalence. For a more in depth discussion of rational equivalence, Chow groups, and Chow rings see Fulton \cite{fulton}. 

\begin{propn}[Lemma 12.5.1 of \cite{david2011toric}] The collections $[V(\sigma)] \in A_j(X_{\Sigma})$ for $\sigma\in \Sigma$ having dimension $n-j$ generate $A_j(X_{\Sigma})$, the Chow group of dimension $j$. Further the collection $[V(\sigma)]$ for all $\sigma\in \Sigma$ generates $A^*(X_{\Sigma})$ as an abelian group.\label{propn:ChowGroupsBasisToric}
\end{propn}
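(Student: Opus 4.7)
The plan is to exploit the orbit–cone correspondence and an induction on a stratification of $X_{\Sigma}$ by dimension of orbits, combined with the right-exact excision sequence for Chow groups. Concretely, for each $i$ set
\begin{equation}
X_i \;=\; \bigcup_{\sigma\in\Sigma,\ \dim\sigma \ge n-i} V(\sigma),
\end{equation}
so that $X_i$ is a closed subvariety of $X_{\Sigma}$ consisting of all torus orbits of dimension at most $i$. The open stratum $X_i \setminus X_{i-1}$ is the disjoint union of the orbits $O(\sigma)$ for $\sigma\in\Sigma(n-i)$; by the orbit–cone correspondence each such orbit is isomorphic to $(\mathbb{C}^*)^{i}$. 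This gives a filtration $\emptyset = X_{-1}\subset X_0 \subset \cdots \subset X_n = X_{\Sigma}$ which will drive the induction.

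First I would record the needed Chow-group computation for tori: since $(\mathbb{C}^*)^i$ is an open subset of $\mathbb{A}^i$ whose complement is a union of coordinate hyperplanes, the excision sequence together with $A_*(\mathbb{A}^i) = \mathbb{Z}[\mathbb{A}^i]$ yields $A_i((\mathbb{C}^*)^i) = \mathbb{Z}$ generated by the fundamental class and $A_j((\mathbb{C}^*)^i)=0$ for $j<i$. Summing over the orbits in a stratum, we obtain
\begin{equation}
A_j\bigl(X_i\setminus X_{i-1}\bigr) \;=\; \begin{cases} \bigoplus_{\sigma\in\Sigma(n-i)} \mathbb{Z}\,[O(\sigma)] & \text{if } j=i,\\ 0 & \text{if } j<i.\end{cases}
\end{equation}

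The main step is then induction on $i$ using the right-exact excision sequence
\begin{equation}
A_j(X_{i-1}) \longrightarrow A_j(X_i) \longrightarrow A_j\bigl(X_i\setminus X_{i-1}\bigr) \longrightarrow 0
\end{equation}
(see Fulton \cite{fulton}, Proposition 1.8). For $j<i$, the right term vanishes and surjectivity of the left map reduces the statement for $A_j(X_i)$ to the inductive hypothesis for $A_j(X_{i-1})$; since every $V(\sigma)$ with $\dim\sigma = n-j$ has dimension $j \le i-1$ and hence already lies in $X_{i-1}$, the generators coincide. For $j=i$, the cokernel $A_i(X_i\setminus X_{i-1})$ is freely generated by the classes $[O(\sigma)]$ for $\sigma\in\Sigma(n-i)$, and each of these classes lifts to $[V(\sigma)]\in A_i(X_i)$ since $V(\sigma) = \overline{O(\sigma)}$; together with the image of $A_i(X_{i-1})$ (which is zero because $\dim X_{i-1}<i$) this yields the desired generating set. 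Taking $i=n$ gives the first assertion.

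The second assertion follows immediately since $X_{\Sigma}$ is $n$-dimensional, so $A^j(X_{\Sigma}) = A_{n-j}(X_{\Sigma})$, and
\begin{equation}
A^*(X_{\Sigma}) \;=\; \bigoplus_{j=0}^{n} A^j(X_{\Sigma}) \;=\; \bigoplus_{j=0}^{n} A_{n-j}(X_{\Sigma})
\end{equation}
is generated as an abelian group by $\bigcup_{k=0}^{n} \{[V(\sigma)] : \sigma\in\Sigma(k)\}$. The principal subtlety, and the only place where serious work is needed, is the torus computation: one must be careful that the excision sequence for the stratification really does terminate in $A_*$ of a disjoint union of tori rather than of affine cells, so that the induction cannot fail by producing unexpected nonzero relations at intermediate stages. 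Once that lemma is in hand, the rest is bookkeeping against the stratification.
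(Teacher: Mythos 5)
The paper does not prove this statement; it is quoted verbatim from Cox--Little--Schenck (Lemma 12.5.1) and used as a black box. Your argument is correct and is essentially the standard proof given in that reference: filter $X_{\Sigma}$ by the closed sets of orbits of bounded dimension, use the right-exact localization sequence, and feed in the vanishing $A_j((\CC^*)^i)=0$ for $j<i$ together with the identification of $V(\sigma)$ as the closure of $O(\sigma)$, so no gap to report.
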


The following proposition gives us a simple method to compute the rational Chow ring of a complete, simplicial toric variety $X_{\Sigma}$. We will use this result to compute the rational Chow ring $A^*(X_{\Sigma})_{\QQ}$ in Algorithm \ref{algorithm:toricAlg}, our algorithm to compute the $c_{SM}$ class of a complete, simplicial toric variety.
\begin{propn}[Theorem 12.5.3 of Cox, Little, Schenck \cite{david2011toric}] Let $N$ be an integer lattice with dual $M$. Let $X_{\Sigma}$ be a complete and simplicial toric variety with generating rays $\Sigma(1)=\rho_1,\dots, \rho_r $ where $\rho_j=\left\langle v_j \right\rangle$ for $v_j\in N$. Then we have that \begin{equation}
\mathbb{Q}[x_1,\dots,x_r]/(\mathcal{I}+\mathcal{J}) \cong A^*(X_{\Sigma})_{\mathbb{Q}}, 
\end{equation} with the isomorphism map specified by $[x_i] \mapsto [V({\rho_i})]$. Here $\mathcal{I}$ denotes the Stanley-Reisner ideal of the fan $\Sigma$, that is the ideal in $\mathbb{Q}[x_1,\dots,x_r]$ specified by \begin{equation}
\mathcal{I}=(x_{i_1}\cdots x_{i_s} \; | \; i_{i_j} \; \mathrm{distinct} \; \mathrm{and \;} \rho_{i_1} + \cdots + \rho_{i_s} \; \mathrm{is \; not \; a \; cone \; of \;} \Sigma ) 
\end{equation} and $\mathcal{J}$ denotes the ideal of $\mathbb{Q}[x_1,\dots,x_r]$ generated by linear relations of the rays, that is $\mathcal{J}$ is generated by linear forms \begin{equation}
\sum_{j=1}^r m(v_j) x_j
\end{equation} for $m$ ranging over some basis of $M$.
\label{propn:ChowRingDef}
\end{propn}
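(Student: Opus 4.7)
The plan is to exhibit an explicit surjective $\mathbb{Q}$-algebra homomorphism $\phi:\mathbb{Q}[x_1,\dots,x_r] \to A^*(X_\Sigma)_\mathbb{Q}$ by $x_i\mapsto [V(\rho_i)]$, identify its kernel as $\mathcal{I}+\mathcal{J}$, and conclude by a dimension count in each graded piece. For $\phi$ to be a ring homomorphism we must know that the product $[V(\rho_{i_1})]\cdots[V(\rho_{i_s})]$ in $A^*(X_\Sigma)_\mathbb{Q}$ makes sense and agrees with the image of $x_{i_1}\cdots x_{i_s}$; this is where simpliciality and working over $\mathbb{Q}$ are essential, since for a simplicial cone $\sigma=\mathrm{cone}(\rho_{i_1},\dots,\rho_{i_s})\in\Sigma$ the intersection-theoretic product of the torus-invariant divisors $V(\rho_{i_j})$ is the class $\tfrac{1}{\mathrm{mult}(\sigma)}[V(\sigma)]$, so each individual product lands where we want.

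Next I would verify that the two types of relations actually vanish in $A^*(X_\Sigma)_\mathbb{Q}$, i.e.\ $\mathcal{I}+\mathcal{J}\subseteq \ker\phi$. For a generator of $\mathcal{J}$ of the form $\sum_j m(v_j) x_j$ with $m\in M$, the character $\chi^m$ is a rational function on $X_\Sigma$ whose principal divisor is $\mathrm{div}(\chi^m)=\sum_j m(v_j)\,V(\rho_j)$, so its image vanishes in $A^1(X_\Sigma)_\mathbb{Q}$. For a squarefree monomial generator $x_{i_1}\cdots x_{i_s}$ of $\mathcal{I}$, the indexing rays do not span a cone of $\Sigma$, so the orbit closures $V(\rho_{i_j})$ have no common point and (using the product description in the previous paragraph) their product must be zero.

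For surjectivity I would invoke Proposition \ref{propn:ChowGroupsBasisToric}: the classes $[V(\sigma)]$ with $\sigma\in\Sigma$ span $A^*(X_\Sigma)_\mathbb{Q}$, and since $[V(\sigma)]$ is a rational multiple of $[V(\rho_{i_1})]\cdots[V(\rho_{i_s})]$ when $\sigma$ has rays $\rho_{i_1},\dots,\rho_{i_s}$, every class is hit by $\phi$. This induces a surjection $\bar\phi:\mathbb{Q}[x_1,\dots,x_r]/(\mathcal{I}+\mathcal{J})\twoheadrightarrow A^*(X_\Sigma)_\mathbb{Q}$.

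The main obstacle is injectivity of $\bar\phi$, i.e.\ showing that $\mathcal{I}+\mathcal{J}$ already captures \emph{all} relations between the orbit closure classes. My plan for this step is a graded dimension count: on the source side, because $\Sigma$ is a complete simplicial fan, the Stanley--Reisner ring $\mathbb{Q}[x_1,\dots,x_r]/\mathcal{I}$ is Cohen--Macaulay of Krull dimension $n$, the images of a basis of $M$ form a linear system of parameters, and the Hilbert function of $\mathbb{Q}[x_1,\dots,x_r]/(\mathcal{I}+\mathcal{J})$ in degree $k$ equals $h_k$, the $k$th entry of the $h$-vector of $\Sigma$. On the target side, the same numbers $h_k$ compute $\dim_\mathbb{Q} A^k(X_\Sigma)_\mathbb{Q}$, e.g.\ via the filtration of $X_\Sigma$ by torus orbits together with Danilov's formula $\dim A^k(X_\Sigma)_\mathbb{Q}=h_k$ for complete simplicial toric varieties. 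Matching these dimensions in each degree forces $\bar\phi$ to be an isomorphism, completing the proof.
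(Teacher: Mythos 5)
The paper does not prove this proposition at all: it is quoted as a black box from Cox--Little--Schenck (Theorem 12.5.3 of \cite{david2011toric}), so there is no internal proof to compare against. Judged on its own terms, your outline is a correct reconstruction of the standard Jurkiewicz--Danilov argument, and it is essentially the route taken in \cite{david2011toric} itself: the relations in $\mathcal{J}$ come from $\mathrm{div}(\chi^m)$, the relations in $\mathcal{I}$ from the emptiness of $V(\rho_{i_1})\cap\cdots\cap V(\rho_{i_s})$ when the rays span no cone, surjectivity from the fact that the $[V(\sigma)]$ generate (Proposition \ref{propn:ChowGroupsBasisToric}) together with $[V(\sigma)]=\mathrm{mult}(\sigma)\prod_j[V(\rho_{i_j})]$ (Proposition \ref{propn:class_of_orbit_closure}), and injectivity from matching Hilbert functions: completeness makes the underlying simplicial complex a triangulation of $S^{n-1}$, so the Stanley--Reisner ring is Cohen--Macaulay by Reisner's criterion, the linear forms from a basis of $M$ are an l.s.o.p.\ by the Kind--Kleinschmidt criterion (each maximal cone is full-dimensional and simplicial), and the Artinian quotient has Hilbert function the $h$-vector, which by Danilov's computation also gives $\dim_{\QQ}A^k(X_\Sigma)_{\QQ}$. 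Two small points to tighten: the product formula $\prod_j[V(\rho_{i_j})]=\tfrac{1}{\mathrm{mult}(\sigma)}[V(\sigma)]$ is not what makes $\phi$ a ring homomorphism (any assignment of the generators $x_i$ extends uniquely once the intersection product on $A^*(X_\Sigma)_{\QQ}$ is defined, which is where completeness and simpliciality enter); it is what you need for surjectivity. And the dimension count must avoid circularity: you need $\dim A^k(X_\Sigma)_{\QQ}=h_k$ established independently of the presentation being proved (e.g.\ via the orbit stratification and the comparison of Chow groups with cohomology for complete simplicial toric varieties), since the surjection alone only gives $\dim A^k(X_\Sigma)_{\QQ}\le h_k$. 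With those citations made explicit the argument is complete.
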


\subsection{Problem}\label{subsection:Problem}
The main problem considered in this note is the following: given a complete simplical toric variety $X_{\Sigma}$ how do does one efficiently compute the class $c_{SM}(X_{\Sigma})$ in the Chow ring $A^*(X_{\Sigma})_{\QQ}$? We will give a method to solve this problem in Algorithm \ref{algorithm:toricAlg}. To further establish the context for this problem, however, we will briefly discuss the definition of the Chern-Schwartz-MacPherson class.

The total Chern class of a $j$-dimensional nonsingular variety $V$ is defined as the Chern class of the tangent bundle $T_V$, we write this as $c(V)=c(T_V) \cdot [V]$ in the Chow ring of $V$, $A_*(V)$. See Fulton \cite[\S3.2]{fulton} for a definition of the Chern class of a vector bundle. As a consequence of the Gauss-Bonnet-Chern theorem (or the Grothendieck-Riemann-Roch theorem, see for example Sch\"urmann and Yokura \cite{SchYokura2005}), we have that the degree of the zero dimensional component of the total Chern class of a projective variety is equal to the Euler characteristic, that is \begin{equation}
\int c(T_V) \cdot [V]=\chi(V). \label{eq:chern_euler_non_singular}
\end{equation}Here $\int \alpha$ denotes the degree of the zero dimensional component of the class $\alpha \in A_*(V)$, i.e. the degree of the part of $\alpha$ in $A_0(V)$.

There are several known generalizations of the total Chern class to singular varieties. All of these notions agree with $c(T_V) \cdot [V]$ for nonsingular $V$, however the Chern-Schwartz-MacPherson class is the only one of these that satisfies a property analogous to (\ref{eq:chern_euler_non_singular}) for any $V$, i.e. \begin{equation}
\int c_{SM}(V)=\chi(V). \label{eq:csm_euler}
\end{equation} 

We review here the construction of the $c_{SM}$ classes, given in the manner considered by MacPherson \cite{macpherson1974chern}. For a scheme $V$, let $\mathcal{C}(V )$ denote the abelian group of finite linear combinations $\sum_W m_W \mathbf{1}_W$, where $W$ are (closed) subvarieties of $V$, $m_W \in \ZZ$, and $\mathbf{1}_W$ denotes the function that is $1$ in $W$, and $0$ outside of $W$. Elements  $f\in \mathcal{C}(V )$ are known as constructible functions and the group  $\mathcal{C}(V )$ is referred to as the group of constructible functions on $V$. To make $\mathcal{C}$ into a functor we let $\mathcal{C}$ map a scheme $V$ to the group of constructible functions on $V$ and a proper morphism $f: V_1 \to V_2$  is mapped by $\mathcal{C}$ to $$\mathcal{C}(f)(\mathbf{1}_W)(p)=\chi(f^{-1}(p) \cap W), \;\;\; W \subset V_1, \; p\in V_2 \; \mathrm{a \; closed \; point}.$$ 

Another functor from algebraic varieties to abelian groups is the Chow group functor $\mathcal{A}_*$. The $c_{SM}$ class may be realized as a natural transformation between these two functors.   
\begin{defn}
The Chern-Schwartz-MacPherson class is the unique natural transformation between the constructible function functor and the Chow group functor, that is $c_{SM}: \mathcal{C}\to \mathcal{A}_*$ is the unique natural transformation satisfying: \begin{itemize}
\item (\textit{Normalization}) $ c_{SM}(\mathbf{1}_V)=c(T_V) \cdot [V] $ for $V $ non-singular  and complete.
\item (\textit{Naturality}) $f_{*}(c_{SM}(\phi))=c_{SM}(\mathcal{C}(f)(\phi))$, for $f:X \to Y$ a proper transformation of projective varieties, $\phi$ a constructible function on $X$. \label{defn:csm_natural_transform}
\end{itemize}
\end{defn}
For a scheme $V$ let $V_{red}$ denote the support of $V$, the notation $c_{SM}(V)$ is taken to mean $c_{SM}(\mathbf{1}_V)$ and hence, since $\mathbf{1}_V=\mathbf{1}_{V_{red}} $, we denote $c_{SM}(V)=c_{SM}(V_{red})$. 

Note that the $c_{SM}$ classes (and constructible functions) also satisfy the same inclusion/exclusion relation as the Euler characteristic, i.e.\ for $V_1,V_2$ subschemes of a scheme $W$ we have
$$
c_{SM}(V_1 \cup V_2) = c_{SM}(V_1) +c_{SM}(V_2) -c_{SM}(V_1\cap V_2).
$$ 

We note that in some settings, such as subschemes of projective spaces or subschemes of some toric varieties, computing the $c_{SM}$ class seems to provide a quite effective means, relative to other available techniques, to compute the Euler characteristic. For a discussion of this see, for example, \cite{helmer2015,helmer2015algorithm}. For toric varieties themselves, however, this is not the case as there is in fact an explicit formula for the Euler characteristic of a toric variety, see Theorem 12.3.9 of Cox, Little and Shanek \cite{david2011toric}. 

\subsection{Review of Results} \label{section:csmToric}
In this section we review the results which will provide the basis for Algorithm \ref{algorithm:toricAlg} below. The main ingredient in this algorithm is the following result of Barthel, Brasselet and Fieseler \cite{barthel1992classes}.

\begin{propn}[Main Theorem of Barthel, Brasselet and Fieseler \cite{barthel1992classes}] Let $X_{\Sigma}$ be an $n$-dimensional complex toric variety specified by a fan $\Sigma$. We have that the  Chern-Schwartz-MacPherson class of $X_{\Sigma}$ can be written in terms of orbit closures as \begin{equation}
c_{SM}(X_{\Sigma})=\sum_{\sigma \in \Sigma} [V(\sigma)] \;\;\; \in A^*(X_{\Sigma})_{\mathbb{Q}} \label{eq:csm_toric_compute}
\end{equation} where $V(\sigma) $ is the closure of the torus orbit corresponding to $\sigma$. 
\label{propn:csm_toric_compute}
\end{propn}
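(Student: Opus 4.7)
The plan is to combine the additivity of $c_{SM}$ on constructible functions with the orbit-cone decomposition of $X_\Sigma$, then to reduce, by induction on dimension, to a single crucial computation for the open torus orbit.

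First, recall from the orbit-cone correspondence that $X_\Sigma = \bigsqcup_{\sigma \in \Sigma} O(\sigma)$, where each $O(\sigma) \cong (\CC^*)^{n-\dim\sigma}$ is open in its closure $V(\sigma)$. Writing
$$\mathbf{1}_{X_\Sigma} = \sum_{\sigma \in \Sigma} \mathbf{1}_{O(\sigma)}$$
and using that $c_{SM}$ is a group homomorphism $\mathcal{C}(X_\Sigma) \to A_*(X_\Sigma)_{\QQ}$, the desired identity is equivalent to the orbit-by-orbit statement
$$c_{SM}(\mathbf{1}_{O(\sigma)}) = [V(\sigma)] \quad \text{for every } \sigma \in \Sigma. \qquad (\ast)$$

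Second, I would prove $(\ast)$ by induction on $n = \dim X_\Sigma$. For $\sigma \neq 0$ the closure $V(\sigma)$ is itself a complete simplicial toric variety of strictly smaller dimension, with orbit structure indexed by cones $\tau \succeq \sigma$. Assuming the proposition for $V(\sigma)$, applying it inside $V(\sigma)$ and pushing forward through the closed embedding $V(\sigma) \hookrightarrow X_\Sigma$ yields, by naturality of $c_{SM}$,
$$c_{SM}(\mathbf{1}_{V(\sigma)}) = \sum_{\tau \succeq \sigma} [V(\tau)].$$
On the other hand the inductive hypothesis gives $c_{SM}(\mathbf{1}_{O(\tau)}) = [V(\tau)]$ for every $\tau \succ \sigma$ (as orbits internal to $V(\sigma)$), and additivity together with $\mathbf{1}_{V(\sigma)} = \sum_{\tau \succeq \sigma}\mathbf{1}_{O(\tau)}$ isolates $c_{SM}(\mathbf{1}_{O(\sigma)}) = [V(\sigma)]$ for $\sigma \neq 0$. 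Thus the only remaining identity is
$$c_{SM}(\mathbf{1}_{T_N}) = [X_\Sigma], \qquad (\dagger)$$
where $T_N = O(0)$ is the open torus orbit.

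Third, I would establish $(\dagger)$. When $X_\Sigma$ is smooth, the Jurkiewicz-Danilov description of the tangent bundle gives $c(T_{X_\Sigma}) = \prod_{\rho \in \Sigma(1)}(1 + [V(\rho)])$; expanding this product in the Chow ring modulo the Stanley-Reisner relations of Proposition \ref{propn:ChowRingDef} leaves precisely $\sum_{\sigma \in \Sigma}[V(\sigma)]$, and combined with the inductive step above this forces $(\dagger)$. For a general complete simplicial $X_\Sigma$, I would pass to a smooth equivariant toric resolution $f:X_{\widetilde{\Sigma}} \to X_\Sigma$ obtained by refining $\Sigma$, and apply the naturality identity
$$f_*\bigl(c_{SM}(X_{\widetilde{\Sigma}})\bigr) = c_{SM}\bigl(\mathcal{C}(f)(\mathbf{1}_{X_{\widetilde{\Sigma}}})\bigr).$$
Since $f$ is an isomorphism over $T_N$, the constructible pushforward of $\mathbf{1}_{T_N}$ upstairs is $\mathbf{1}_{T_N}$ downstairs, transferring the smooth case of $(\dagger)$ to $X_\Sigma$.

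The main obstacle I anticipate lies in this last transfer: one must check that the corrections coming from the fibers $f^{-1}(p)$ for $p \in X_\Sigma \setminus T_N$ really do reassemble into $\sum_{\sigma \neq 0} c_{SM}(\mathbf{1}_{O(\sigma)})$ already handled by the induction, i.e.\ that the Euler characteristics of the toric fibers conspire to give exactly the predicted multiplicity-one coefficients. This is essentially a combinatorial bookkeeping argument based on the star-fan structure of $\Sigma$ at each cone, and it is where the striking simplicity of $(\ast)$, namely that every orbit closure appears with coefficient one regardless of the singularities of $X_\Sigma$, is actually paid for.
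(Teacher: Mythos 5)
The paper offers no proof of this proposition: it is quoted verbatim as the Main Theorem of Barthel--Brasselet--Fieseler \cite{barthel1992classes} and used as a black box, so there is no internal argument to compare yours against. That said, your proposal is essentially the standard proof of that theorem, and it is sound. The reduction via $\mathbf{1}_{X_\Sigma}=\sum_\sigma \mathbf{1}_{O(\sigma)}$ to the orbit-by-orbit identity $(\ast)$, the induction through the orbit closures $V(\sigma)$ (each of which is again a toric variety for the star fan, so the induction hypothesis hands you every $\sigma\neq 0$ directly as the dense-orbit case of the smaller variety), and the final reduction to $c_{SM}(\mathbf{1}_{T_N})=[X_\Sigma]$ via a smooth toric resolution are exactly the right skeleton. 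One remark: the obstacle you flag at the end is not actually there. Because a toric resolution $f\colon X_{\widetilde\Sigma}\to X_\Sigma$ is torus-equivariant, $f^{-1}(T_N)=\widetilde T_N$ and $f|_{\widetilde T_N}$ is an isomorphism, so $\mathcal{C}(f)(\mathbf{1}_{\widetilde T_N})=\mathbf{1}_{T_N}$ \emph{exactly}: the fibers over points of $X_\Sigma\setminus T_N$ never meet $\widetilde T_N$, so no Euler-characteristic corrections arise and no bookkeeping over the star fans is needed. That is precisely why one pushes forward $\mathbf{1}_{\widetilde T_N}$ rather than $\mathbf{1}_{X_{\widetilde\Sigma}}$, and naturality then gives $c_{SM}(\mathbf{1}_{T_N})=f_*[X_{\widetilde\Sigma}]=[X_\Sigma]$ at once. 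The only points worth tightening are routine: the normalization axiom as stated in Definition \ref{defn:csm_natural_transform} requires completeness, so your Jurkiewicz--Danilov step for the smooth case should be run on complete smooth toric varieties (which suffices here, since refining a complete fan yields a complete fan and orbit closures of complete toric varieties are complete); for the fully general statement of the proposition one would invoke the extension of MacPherson's normalization to the non-compact setting.
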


We now recall the definition of the multiplicity of a simplicial cone, for more details see \S{6.4} of Cox, Little, and Schenck \cite{david2011toric}. Let $N$ be an integer lattice with dual lattice $M$, let $\sigma=\left\langle v_1, \dots, v_d \right\rangle$ be a simplicial cone and let \begin{equation}
N_{\sigma}=\mathrm{Span}(\sigma) \cap N \label{eq:N_sigma},
\end{equation} recall that $\mathrm{Span}(\sigma) \subset N_{\RR}$ is the smallest subspace of the vector space $N_{\RR}$ which contains $\sigma$. We note that the index of the subgroup $\ZZ v_1 +\cdots +\ZZ v_d \subset N_{\sigma}$ in $N_{\sigma}$ is finite. We define the multiplicity of $\sigma$ as \begin{equation}
\mathrm{mult}(\sigma)=\left[ N_{\sigma}: \ZZ v_1 +\cdots +\ZZ v_d \subset N_{\sigma} \right]\label{eq:mult_def}
\end{equation} where $[G:H]$ denotes the index of a subgroup $H$ in a group $G$. In practice we shall employ Lemma \ref{lemma:MultCone} to compute $\mathrm{mult}(\sigma)$. Specifically Lemma \ref{lemma:MultCone} will allow us to compute the multiplicity of a simplicial cone. Since we only consider complete simplicial toric varieties in Algorithm \ref{algorithm:toricAlg} this lemma may be used to compute the multiplicity in all cases considered here. 

To compute the classes $[V(\sigma)]$ appearing in \eqref{eq:csm_toric_compute} we will employ the following proposition combined with Proposition \ref{propn:ChowRingDef}.
\begin{propn}[Theorem 12.5.2. of Cox, Little, Schenck \cite{david2011toric}]
Assume that $X_{\Sigma}$ is complete and simplicial. If $\rho_1, \dots, \rho_d \in \Sigma(1)$ are distinct and if $\sigma=\rho_1+\cdots+\rho_d \in \Sigma$ then in $A^*(X_{\Sigma})$ we have the following: \begin{equation}
[V(\sigma)]=\mathrm{mult}(\sigma)[V(\rho_1)]\cdot [V(\rho_2)]\cdots [V(\rho_d)]. 
\end{equation} Here $\mathrm{mult}(\sigma)$ will be calculated using Lemma \ref{lemma:MultCone}. \label{propn:class_of_orbit_closure}
 \end{propn}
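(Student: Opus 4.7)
The plan is to compute the product $[V(\rho_1)]\cdots[V(\rho_d)]$ locally on the affine chart $U_\sigma$, where the entire intersection is supported, and to extract the factor $\mathrm{mult}(\sigma)$ from the quotient description of a simplicial affine toric variety.

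First I would verify that the intersection is proper with support $V(\sigma)$. By the orbit-cone correspondence, $V(\rho_i) = \bigsqcup_{\tau \succeq \rho_i} O(\tau)$, so a torus orbit $O(\tau)$ lies in $V(\rho_1)\cap\cdots\cap V(\rho_d)$ exactly when every $\rho_i$ is a face of $\tau$, i.e.\ when $\sigma = \rho_1+\cdots+\rho_d$ is a face of $\tau$. Hence $V(\rho_1)\cap\cdots\cap V(\rho_d)=V(\sigma)$ has pure codimension $d$; this matches the expected codimension, so the intersection product $[V(\rho_1)]\cdots[V(\rho_d)]$ is a rational multiple of $[V(\sigma)]$, and it suffices to determine that multiple.

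Next I would realize $U_\sigma$ as a finite quotient. Since $\sigma$ is simplicial with primitive generators $v_1,\dots,v_d$, the sublattice $N'_\sigma := \ZZ v_1+\cdots+\ZZ v_d\subseteq N_\sigma$ has index $\mathrm{mult}(\sigma)$; because $N_\sigma$ is saturated, $N/N_\sigma$ is free and we may split $N = N_\sigma\oplus N''$. Setting $N^\flat := N'_\sigma\oplus N''$ yields a lattice inclusion $N^\flat \hookrightarrow N$ of index $\mathrm{mult}(\sigma)$, which induces a finite toric morphism $\pi\colon \widetilde U \to U_\sigma$ of degree $\mathrm{mult}(\sigma)$, where $\widetilde U \cong \CC^d\times(\CC^*)^{n-d}$ is smooth. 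Since each $v_i$ is a basis vector of $N^\flat$, one has $\pi^*[V(\rho_i)] = [\{z_i=0\}]$ with no extra multiplicity, and these coordinate divisors meet transversally on $\widetilde U$, so $\prod_i \pi^*[V(\rho_i)] = [\{z_1=\cdots=z_d=0\}]$. Moreover, the induced lattice map $N^\flat/N'_\sigma \to N/N_\sigma$ is the identity on $N''$, so $\pi$ restricts to an isomorphism on the dense torus orbit of $\{z_1=\cdots=z_d=0\}$, giving $\pi_*[\{z_1=\cdots=z_d=0\}] = [V(\sigma)]\big|_{U_\sigma}$.

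Finally, the projection formula yields $\pi_*\pi^*(\alpha) = \deg(\pi)\cdot\alpha = \mathrm{mult}(\sigma)\cdot\alpha$, hence
$$
\mathrm{mult}(\sigma)\cdot\bigl([V(\rho_1)]\cdots[V(\rho_d)]\bigr)\big|_{U_\sigma} = \pi_*\Bigl(\prod_{i=1}^d \pi^*[V(\rho_i)]\Bigr) = [V(\sigma)]\big|_{U_\sigma},
$$
and since both sides are supported on $V(\sigma)\cap U_\sigma$, this identity lifts to the desired global equality in $A^*(X_\Sigma)$. The main obstacle is keeping careful track of multiplicities in the pullback and pushforward steps: one must verify that $\pi^*[V(\rho_i)]$ picks up no ramification (which rests on $v_i$ being primitive in $N^\flat$) and that the pushforward of the closed orbit has degree $1$ (which rests on the chosen splitting confining the index $\mathrm{mult}(\sigma)$ entirely to the $N_\sigma/N'_\sigma$ factor while acting trivially on $N''$).
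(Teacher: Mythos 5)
The paper states this proposition as an imported result (Theorem 12.5.2 of Cox--Little--Schenck) and gives no proof of its own, so there is nothing internal to compare against; what can be assessed is whether your argument stands on its own, and it does. Your two-step structure is sound: the orbit--cone computation shows the supports meet properly in the irreducible variety $V(\sigma)$, so the product is $c\,[V(\sigma)]$ for a single rational constant $c$ that may be determined on any open set meeting $V(\sigma)$ --- this is exactly what licenses the passage from the identity on $U_\sigma$ back to $A^*(X_\Sigma)$, and it is good that you set it up before the local computation rather than appealing vaguely to excision afterwards. The local step via the finite toric cover $\pi\colon \widetilde U\cong\CC^d\times(\CC^*)^{n-d}\to U_\sigma$ of degree $[N:N^\flat]=[N_\sigma:N'_\sigma]=\mathrm{mult}(\sigma)$ is the standard quotient presentation of a simplicial affine toric variety, and you correctly isolate the two points where multiplicities could leak in: $\pi^*D_{\rho_i}$ is unramified because $v_i$ is primitive in $N^\flat$ as well as in $N$ (note the $D_{\rho_i}$ are only $\QQ$-Cartier on $U_\sigma$, so the pullback should be read as a pullback of $\QQ$-divisors, which is where the $\QQ$-coefficients in the statement are genuinely used), and $\pi$ restricts to degree $1$ on the closed orbit because the splitting $N=N_\sigma\oplus N''$ pushes the whole index into the $N_\sigma/N'_\sigma$ factor. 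The projection formula then gives the constant. The textbook proof one would find in Cox--Little--Schenck instead inducts on $d$ using the one-step formula $[D_\rho]\cdot[V(\tau)]=\tfrac{\mathrm{mult}(\tau)}{\mathrm{mult}(\sigma)}[V(\sigma)]$ for $\sigma=\tau+\rho$ and telescopes; your single covering argument is a clean alternative that makes the origin of the factor $\mathrm{mult}(\sigma)$ more transparent, at the cost of invoking the ring isomorphism $A^*(U_\sigma)_\QQ\cong A^*(\widetilde U)_\QQ^{G}$ for the finite quotient so that $\pi^*$ respects products.
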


\section{Algorithm and Performance}\label{section:AlgNPreform}
In this section we describe the process by which we turn the Main Theorem of Barthel, Brasselet and Fieseler \cite{barthel1992classes} (Proposition \ref{propn:csm_toric_compute}) into a computational method to find $c_{SM}$ classes of complete simplicial toric varieties. 
\subsection{Computing Multiplicitiy}\label{subsection:Multiplcites}
One of the main computational steps in Algorithm \ref{algorithm:toricAlg} below, for singular cases, is the computation of the multiplicity of a cone $\sigma \in \Sigma$. In practice this computation will be accomplished using Lemma \ref{lemma:MultCone}. This lemma is a modified version of Proposition 11.1.8.\ of Cox, Little, Schenck \cite{david2011toric}. We have altered the statement of the result to explicitly show how we will compute these multiplicities in practice. The main point here is to show how the definition of the multiplicity of a cone given in \eqref{eq:mult_def} can be phrased in terms of straightforward linear algebra computations in the cases considered in this note.  
\begin{lemma}[Modified version of Proposition 11.1.8.\ of Cox, Little, Schenck \cite{david2011toric}]
Let $N=\ZZ^n$ be an integer lattice. For a simplicial cone $\sigma=\rho_1+\cdots+\rho_d \subset N$ let $\mathfrak{M}_{\sigma} $ be the matrix with columns specified by the generating vectors of the rays $\rho_1,\dots,\rho_d$ which define the cone $\sigma$; we have \begin{equation}
\mathrm{mult}(\sigma)= \left| \det( \mathrm{Herm}(\mathfrak{M}_{\sigma})) \right|
\end{equation} where $\mathrm{Herm}(\mathfrak{M}_{\sigma})$ denotes the Hermite normal form of matrix $\mathfrak{M}_{\sigma}$ with all zero rows and/or zero columns removed. Further $\mathrm{mult}(\sigma)=1$ if and only if $U_{\sigma}$ is smooth.
\label{lemma:MultCone}
\end{lemma}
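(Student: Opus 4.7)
The plan is to translate the definition of $\mathrm{mult}(\sigma)$ as a lattice index into a concrete linear-algebra computation via the Hermite normal form. By definition, $\mathrm{mult}(\sigma) = [N_\sigma : \Lambda]$ where $\Lambda = \mathbb{Z} v_1 + \cdots + \mathbb{Z} v_d$ is the sublattice generated by the ray generators and $N_\sigma = \mathrm{Span}(\sigma) \cap N$ is its saturation inside $N = \mathbb{Z}^n$. Since $\sigma$ is simplicial of dimension $d$, the vectors $v_1,\dots,v_d$ are $\mathbb{R}$-linearly independent, so $\mathfrak{M}_\sigma$ is an $n \times d$ integer matrix of full column rank $d$ and $\Lambda$, $N_\sigma$ are both free $\mathbb{Z}$-modules of rank $d$.

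The next step is to reduce the index computation to the determinant of a triangular matrix. By the standard Hermite normal form algorithm over $\mathbb{Z}$, there exists a unimodular matrix $U \in GL_n(\mathbb{Z})$ such that $U \mathfrak{M}_\sigma = H$ has the shape $\bigl[\begin{smallmatrix} H' \\ 0 \end{smallmatrix}\bigr]$, where $H'$ is a $d \times d$ upper triangular matrix with strictly positive diagonal entries and the bottom $(n-d)$ rows are zero. Since $U$ is unimodular it induces an automorphism of $N$, so it preserves lattice indices: $[N_\sigma : \Lambda] = [U N_\sigma : U \Lambda]$. I would then observe that $U \Lambda$ is precisely the $\mathbb{Z}$-span of the columns of $H$, which lies inside $\mathbb{Z}^d \times \{0\}^{n-d}$, and that this latter lattice is saturated in $\mathbb{Z}^n$ and has rank $d$, hence must equal $U N_\sigma$.

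The index of $U\Lambda$ inside $\mathbb{Z}^d \times \{0\}^{n-d}$ is the index of the sublattice of $\mathbb{Z}^d$ generated by the columns of $H'$, which is $|\det(H')|$ by the standard formula for the index of a sublattice of a free $\mathbb{Z}$-module via any basis matrix. Finally, since removing the $n-d$ zero rows from $H$ produces exactly $H'$ (and no genuine zero columns arise because $\mathfrak{M}_\sigma$ has linearly independent columns), the prescription $|\det(\mathrm{Herm}(\mathfrak{M}_\sigma))|$ in the statement coincides with $|\det(H')|$, yielding $\mathrm{mult}(\sigma) = |\det(\mathrm{Herm}(\mathfrak{M}_\sigma))|$.

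For the last assertion, I would invoke the standard characterization of smoothness for affine toric varieties: $U_\sigma$ is smooth if and only if $v_1,\dots,v_d$ extend to a $\mathbb{Z}$-basis of $N_\sigma$, equivalently $\Lambda = N_\sigma$, equivalently $[N_\sigma : \Lambda] = 1$. This is immediate from the formula just proved: $U_\sigma$ smooth $\iff \mathrm{mult}(\sigma) = 1 \iff |\det(\mathrm{Herm}(\mathfrak{M}_\sigma))| = 1$. The main (minor) obstacle is verifying that $U N_\sigma = \mathbb{Z}^d \times \{0\}^{n-d}$, i.e.\ that saturation commutes with the unimodular change of coordinates and that the target lattice is indeed saturated; everything else is a bookkeeping application of Hermite normal form and the definition of the lattice index.
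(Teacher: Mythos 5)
Your proof is correct and arrives at the same formula, but it is organized differently from the paper's argument and is, if anything, the more robust of the two. The paper first invokes Proposition 11.1.8 of Cox--Little--Schenck, which gives $\mathrm{mult}(\sigma)=|\det([a_{i,j}])|$ for the matrix expressing the generators $u_1,\dots,u_d$ in a basis $e_1,\dots,e_d$ of $N_\sigma$, and then identifies $[a_{i,j}]$ with $\mathrm{Herm}(\mathfrak{M}_\sigma)$ by writing a factorization of $\mathfrak{M}_\sigma$ with the Hermite block on the left and a $d\times d$ unimodular matrix $T$ acting on the \emph{right}. You instead work directly from the definition of $\mathrm{mult}(\sigma)$ as the index $[N_\sigma:\Lambda]$, apply a \emph{left} unimodular $U\in GL_n(\ZZ)$ to put $\mathfrak{M}_\sigma$ in row-style Hermite form, verify that $U$ carries the saturation $N_\sigma$ onto $\ZZ^d\times\{0\}^{n-d}$, and read off the index as $|\det(H')|$ via the standard index-equals-determinant fact. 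This buys two things: it bypasses the intermediate change-of-basis formula entirely, and it places the unimodular factor on the correct side --- column operations alone (a right factor $T$) preserve the lattice generated by the columns and therefore cannot in general annihilate the bottom $n-d$ rows of a tall full-column-rank integer matrix; the trailing zero block only appears after a change of coordinates on $N$, i.e., a left factor, which is exactly what you use. The one point you flag as needing care, $UN_\sigma=\ZZ^d\times\{0\}^{n-d}$, is handled correctly: $U$ is an automorphism of $N$, so it commutes with taking saturations, and the target lattice is saturated of the right rank. The smoothness assertion is treated the same way in both arguments, by the standard criterion that $U_\sigma$ is smooth iff the generators form a basis of $N_\sigma$, i.e., iff the index is $1$.
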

\begin{proof}
Suppose $\rho_1= \left\langle u_1 \right\rangle, \dots, \rho_d= \left\langle u_d \right\rangle$ so that we can write $\sigma= \left\langle u_1, \dots, u_d \right\rangle$. In Proposition 11.1.8.\ of Cox, Little, Schenck \cite{david2011toric} it is shown that if $e_1, \dots, e_d$ is a basis for $N_{\sigma}$ (see \eqref{eq:N_sigma}) and $u_i=\sum_{j=1}^d a_{i,j}e_j=E [a_{i,j}]$ (where $E$ is the $n \times d$ matrix with columns $e_1, \dots, e_d$) then we have that \begin{equation}
\mathrm{mult}(\sigma)= \left| \det \left( [a_{i,j}] \right) \right|. \label{eq:det_mult_CoxLittleSchenck}
\end{equation}

The matrix $\mathfrak{M}_{\sigma}$ defined by the rays $\rho_1,\dots,\rho_d$ is the $n\times d$ matrix with columns given by the vectors $u_1,\dots, u_d$. Note that $\mathfrak{M}_{\sigma}$ has rank $d$. Choose $e_1,\dots, e_d$ to be a basis of $N_{\sigma}$ so that the matrix $E$ with columns $e_1,\dots, e_d$ has the form $$E=\left[ \begin{array}{c}
\tilde{E}\\ \mathbf{0} \end{array} \right]$$ with $\det(\tilde{E})=1$. Now since $\mathfrak{M}_{\sigma}$ has rank $d$ we may write $$\mathfrak{M}_{\sigma}=\left[ \begin{array}{c}
\mathrm{Herm}(\mathfrak{M}_{\sigma})\\ \mathbf{0} \end{array} \right]T$$ for $\mathrm{Herm}(\mathfrak{M}_{\sigma})$ the $d\times d$ matrix obtained from the Hermite normal form of $ \mathfrak{M}_{\sigma}$ with the zero rows removed and $T$ a $d\times d$ unimodular matrix. Then we have that  $$
\left[ \begin{array}{c}
\tilde{E}\\ \mathbf{0} \end{array} \right] [a_{i,j}]=\left[ \begin{array}{c}
\mathrm{Herm}(\mathfrak{M}_{\sigma})\\ \mathbf{0} \end{array} \right] T,
$$ and hence $\tilde{E}[a_{i,j}]=\mathrm{Herm}(\mathfrak{M}_{\sigma}) T$. Note that $\det(\tilde{E})=\det(T)=1$, this gives that $\det([a_{i,j}])=\det \left(\mathrm{Herm}(\mathfrak{M}_{\sigma}) \right)$ as claimed.

 The Hermite normal form of $\mathfrak{M}_{\sigma}$ is obtained by performing unimodular column operations on $\mathfrak{M}_{\sigma}$ and thus represents a change of basis of $N_{\sigma}$, we may call this new basis for $N_{\sigma} $ $e_1, \dots, e_d$. Since $\mathfrak{M}_{\sigma}$ has rank $d$ then removing the zero rows we obtain the matrix $ \mathrm{Herm}(\mathfrak{M}_{\sigma})$ and we may then take this matrix to be the matrix $[a_{i,j}]$ in \eqref{eq:det_mult_CoxLittleSchenck} since the matrix $ \mathrm{Herm}(\mathfrak{M}_{\sigma})$ specifies the change of basis for $N_{\sigma}$ from $u_1,\dots, u_d$ to $e_1, \dots, e_d$.
The matrix $\mathfrak{M}_{\sigma}$ defined by the rays $\rho_1,\dots,\rho_d$ is the matrix with columns given by the vectors $u_1,\dots, u_d$, that is $\mathfrak{M}_{\sigma}=[u_1, \dots , u_d]$, further $\mathfrak{M}_{\sigma}$ has rank $d$.  $N_{\sigma}=\mathrm{Span}(\sigma) \cap N$ is the lattice generated by the columns of the matrix $\mathfrak{M}_{\sigma}$, that is $N_{\sigma}=\left\lbrace y \; | \; y= \mathfrak{M}_{\sigma}x , \; x \in \RR^d \right\rbrace \cap N$. From the definition of the Hermite form we have that $$\mathfrak{M}_{\sigma}T= \left[ \begin{array}{c}
\mathrm{Herm}(\mathfrak{M}_{\sigma}) \\ \mathbf{0} \end{array} \right] $$ for some unimodular matrix $T$. Thus we have $$N_{\sigma}=\left\lbrace y \; | \; y= \left[ \begin{array}{c}
\mathrm{Herm}(\mathfrak{M}_{\sigma}) \\ \mathbf{0} \end{array} \right] x , \; x \in \RR^d \right\rbrace\cap N ,$$ meaning we may take the matrix $[a_{i,j}]=\mathrm{Herm}(\mathfrak{M}_{\sigma})$ in \eqref{eq:det_mult_CoxLittleSchenck} and the conclusion follows. 

The remaining statements are given in the form stated above in Proposition 11.1.8. of Cox, Little, Schenck \cite{david2011toric}. 
\end{proof}
\subsection{Algorithm}\label{subsection:Algorithm}
In Algorithm \ref{algorithm:toricAlg} we present an algorithm to compute $c_{SM}(X_{\Sigma})$ for a complete, simplicial toric variety $ X_{\Sigma}$ defined by a fan $\Sigma$. Note that we represent $[V({\rho_j})]$ as $x_j$ via the isomorphism given in Proposition \ref{propn:ChowRingDef}. 


\begin{algorithm}


 \textbf{Input:} A complete, simplicial toric variety $ X_{\Sigma}$ defined by a fan $\Sigma$ with $\Sigma(1)=\left\lbrace \rho_1, \dots , \rho_r \right\rbrace$ and a boolean, Euler\_only, indicating if only the Euler characteristic is desired. We assume $\dim(X_{\Sigma}) \geq 1$.\newline 
 \textbf{Output:} $c_{SM}(X_{\Sigma})$ in $A^*(X_{\Sigma})_{\mathbb{Q}}\cong  \mathbb{Q}[x_1,\dots,x_r]/(\mathcal{I}+\mathcal{J})  $ and/or the Euler characteristic $\chi(X_{\Sigma})$, if Euler\_only=true then only $\chi(X_{\Sigma})$ will be computed. 
\begin{itemize}
\item Compute the rational Chow ring $A^*(X_{\Sigma})_{\mathbb{Q}}\cong  \mathbb{Q}[x_1,\dots,x_r]/(\mathcal{I}+\mathcal{J})  $ using Proposition \ref{propn:ChowRingDef}.
\item $\mathrm{csm}=0$.
\item \textbf{For $i$ from $\dim(X_{\Sigma})$ to $1 $:}
\begin{itemize}
\item $\mathrm{orbits}=$ all subsets of $\Sigma(1)=\left\lbrace \rho_1, \dots , \rho_r \right\rbrace$ containing $i$ elements.
\item $\mathrm{total}=0$.
\item \textbf{For $ \rho_{j_1},\dots, \rho_{j_s}$ in $ \mathrm{orbits}$:}
\begin{itemize}
\item $\sigma=\rho_{j_1}+\cdots+ \rho_{j_s}$.
\item Find $w=\mathrm{mult}(\sigma)$ using Lemma \ref{lemma:MultCone}.
\item $[V(\sigma)]=\mathrm{mult}(\sigma)[V({\rho_{i_1}})]\cdots [V({\rho_{i_s}})]=w \cdot x_{i_1} \cdots x_{i_s}$.
\item $\mathrm{total}=\mathrm{total}+[V(\sigma)]$.
\end{itemize}
\item $\mathrm{csm}=\mathrm{csm}+\mathrm{total}$.
\item \textbf{If $i==\dim(X_{\Sigma})$:} \begin{itemize}

\item Set $(c_{SM}(X_{\Sigma}))_0=\mathrm{csm}$.
\item Set $\chi(X_{\Sigma})=$ sum of the coefficients of the monomials in $(c_{SM}(X_{\Sigma}))_0$.

\item \textbf{If Euler\_only==true:}
 \begin{itemize}
 \item \textbf{Return $\chi(X_{\Sigma})$}.
\end{itemize}
\end{itemize}
\end{itemize}
\item Set $c_{SM}(X_{\Sigma})=\mathrm{csm}$.
\item \textbf{Return $c_{SM}(X_{\Sigma})$ and/or $\chi(X_{\Sigma})$ }.
\end{itemize}
\label{algorithm:toricAlg}
\end{algorithm} 

 We note that Algorithm \ref{algorithm:toricAlg} is strictly combinatorial; hence the runtime depends only on the combinatorics of the fan $\Sigma$ defining the toric variety. 
 
 
 
 In this subsection we give the run times for Algorithm \ref{algorithm:toricAlg} applied to a variety of examples. Consider a complete simplicial toric variety $X_{\Sigma}$. We give two alternate implementations of Algorithm \ref{algorithm:toricAlg} to reflect what we can expect the timings to be in both the smooth cases and singular cases. 
 
Specifically the running times in Table \ref{table:CSMToric} for Algorithm \ref{algorithm:toricAlg} marked with a $\dagger$ check the input to see if the given fan $\Sigma$ defines a smooth toric variety, if it does these implementations use the fact that $\mathrm{mult}(\sigma)=1$ for all $\sigma \in \Sigma$ and hence do not compute the Hermite normal forms and their determinates in Lemma \ref{lemma:MultCone}. However to show how the algorithm would perform on a singular input of a similar size and complexity we also give running times for an implementation which always computes the Hermite forms and their determinates in Lemma \ref{lemma:MultCone}. 

In this way we see in a precise manner what the extra cost associated to computing the $c_{SM}$ class and Euler characteristic of a singular toric variety would be in comparison to the cost of computing a smooth toric variety defined by a fan having similar combinatorial structure.  Hence the running time for a given example would be very similar to that of a singular toric variety with a similar number and dimension of cones to those considered in the examples in Table \ref{table:CSMToric}. 

By default the implementation of Algorithm \ref{algorithm:toricAlg} in our ``CharToric" package checks if the input defines a smooth toric variety, i.e.\ performs the procedure of the implementations marked with $\dagger$. As such the performance of the package methods on smooth cases can be expected to be that of Algorithm \ref{algorithm:toricAlg} $\dagger$ in Table \ref{table:CSMToric} below. 

We also remark that the extra cost in the singular case (or in the case where we don't check the input) comes entirely from performing linear algebra with integer matrices. As such the running times in these cases could perhaps be somewhat reduced by using a specialized integer linear algebra package. To give a rough quantification of what performance improvement one might expect from this we performed some testing using LinBox \cite{Linbox} and PARI \cite{PARI2} via Sage \cite{sage} on linear systems of similar size and structure to those arising in the examples in Table \ref{table:CSMToric}. In this testing we found that the specialized algorithms seemed to be around two to three times faster than the linear algebra methods used by our implementation in the ``CharToric" package, however this testing is by no means conclusive. 

In any case it seems reasonable to conclude that some performance increase could be expected, for singular examples, if one used a specialized, fast integer linear algebra package to compute the Hermite forms and determinates arising in Algorithms \ref{algorithm:toricAlg}. Finally we note that additional efficiencies in implementation might also be found by a more careful implementation of the combinatorial procedures in a compiled language such as C or C++ rather than the Macaulay2 \cite{M2} language used here, which is an interpreted language. 
 \begin{table}[h!]
\resizebox{\linewidth}{!}{

\begin{tabular}{@{} l *8c @{}}
\toprule 
 \multicolumn{1}{c}{{\color{Ftitle} \textbf{Input}}} & {\color{Ftitle} Alg.\ \ref{algorithm:toricAlg} $\dagger$ }  & {\color{Ftitle} Alg.\ \ref{algorithm:toricAlg} (Euler only) $\dagger$ }  & {\color{Ftitle} Alg. \ref{algorithm:toricAlg} }  & {\color{Ftitle} Alg.\ \ref{algorithm:toricAlg} (Euler only)}  &\color{Ftitle} Chow Ring (Prop.\ \ref{propn:ChowRingDef}) \\ 
 \midrule 
  $\pp^6$  & {\color{line} 0.0s}&{\color{line} 0.0s}& 0.0s&0.0s& 0.1 s\\
    $ \pp^{16}$  & {\color{line} 5.3s}&{\color{line} 0.0s}& 85.4s&0.0s& 0.7 s\\
   $\pp^5 \times \pp^6$  & {\color{line} 0.3s}&{\color{line} 0.0s}& 3.7s&0.0s& 1.2 s\\
  $\pp^5 \times \pp^8$  & {\color{line} 1.1s}&{\color{line} 0.0s}& 16.8s &0.1s& 2.1 s\\
   $\pp^8 \times \pp^8$  & {\color{line} 12.0s}&{\color{line} 0.1s}& 168.5s&0.1s& 4.5 s\\
   $\pp^5 \times \pp^5 \times \pp^5$   & {\color{line} 12.8s}&{\color{line} 0.2s}& 156.7s&0.6s& 11.8 s\\
   $\pp^5 \times \pp^5 \times \pp^6$   & {\color{line} 28.4s}&{\color{line} 0.3s}& 387.1s&0.8s& 17.0 s\\
   Fano sixfold 123&  {\color{line} 0.3s}&{\color{line} 0.0s}& 1.0s&0.4s& 1.1 s\\
  Fano sixfold 1007&  {\color{line} 0.4s}&{\color{line} 0.1s}& 1.0s&0.1s& 1.8 s\\
\bottomrule
 \end{tabular}}
 \caption{ In the table we present the time to compute the Chow ring separately from the time required for the other computations, as such the total run time for each algorithm will be the time listed in its column plus the time to compute the Chow ring if the Chow ring is not already known. Computations were performed using Macaulay2 \cite{M2} on a computer with a 2.9GHz Intel Core i7-3520M CPU and 8 GB of RAM. The Fano sixfolds are those built by the smoothFanoToricVariety method in the ``NormalToricVarieties" Macaulay2 \cite{M2} package. $\pp^n$ denotes a projective space of dimension $n$.}
 \label{table:CSMToric}
 \end{table}


\newpage
\bibliographystyle{plain}
\bibliography{refs}

\begin{thebibliography}{10}

\bibitem{aluffi2003computing}
Paolo Aluffi.
\newblock Computing characteristic classes of projective schemes.
\newblock {\em Journal of Symbolic Computation}, 35(1):3--19, 2003.

\bibitem{barthel1992classes}
Gottfried Barthel, J-P Brasselet, and K-H Fieseler.
\newblock Classes de {C}hern de vari{\'e}t{\'e}s toriques singuli{\`e}res.
\newblock {\em Comptes rendus de l'Acad{\'e}mie des sciences. S{\'e}rie 1,
  Math{\'e}matique}, 315(2):187--192, 1992.

\bibitem{brasselet1981classes}
Jean-Paul Brasselet and Marie-H\'el\`ene Schwartz.
\newblock Sur les classes de {C}hern d'un ensemble analytique complexe.
\newblock {\em Ast{\'e}risque}, 82(83):93--147, 1981.

\bibitem{david2011toric}
David~A. Cox, John~B. Little, and Henry~K. Schenck.
\newblock {\em Toric varieties}, volume 124.
\newblock American Mathematical Soc., 2011.

\bibitem{fulton}
William Fulton.
\newblock {\em Intersection Theory}.
\newblock Springer, 2nd edition, 1998.

\bibitem{M2}
Daniel~R. Grayson and Michael~E. Stillman.
\newblock {\em Macaulay2, a software system for research in algebraic
  geometry}, 2013.

\bibitem{helmer2015algorithm}
Martin Helmer.
\newblock An algorithm to compute the topological {E}uler characteristic, the
  {C}hern-{S}chwartz-{M}acpherson class and the {S}egre class of subschemes of
  some smooth complete toric varieties.
\newblock {\em arXiv:1508.03785}, 2015.

\bibitem{helmer2015}
Martin Helmer.
\newblock Algorithms to compute the topological {E}uler characteristic,
  {C}hern-{S}chwartz-{M}acpherson class and {S}egre class of projective
  varieties.
\newblock {\em Journal of Symbolic Computation}, 2015.

\bibitem{helmerTCS}
Martin Helmer.
\newblock A direct algorithm to compute the topological {E}uler characteristic
  and {C}hern-{S}chwartz-{M}acpherson class of projective complete intersection
  varieties.
\newblock {\em Submitted to a Special Issue of the Journal of Theoretical
  Computer Science for SNC-2014. Avalible on the Arxiv at
  \url{http://arxiv.org/abs/1410.4113}}, 2015.

\bibitem{jost2013algorithm}
Christine Jost.
\newblock An algorithm for computing the topological {E}uler characteristic of
  complex projective varieties.
\newblock {\em arXiv:1301.4128}, 2013.

\bibitem{Linbox}
The LinBox~Group.
\newblock {\em {LinBox -- Exact Linear Algebra over the Integers and Finite
  Rings, Version 1.1.6}}, 2008.

\bibitem{macpherson1974chern}
Robert~D MacPherson.
\newblock {C}hern classes for singular algebraic varieties.
\newblock {\em The Annals of Mathematics}, 100(2):423--432, 1974.

\bibitem{SchYokura2005}
J\"org Sch\"urmann and Shoji Yokura.
\newblock {\em A Survey of Characteristic Classes of Singular Spaces},
  chapter~37, pages 865--952.
\newblock World Scientific, 2007.

\bibitem{schwartz1965classes}
Marie-H{\'e}l{\`e}ne Schwartz.
\newblock Classes caract{\'e}ristiques d{\'e}finies par une stratification
  d'une vari{\'e}t{\'e} analytique complexe.
\newblock {\em Comptes Rendus de l'Acad{\'e}mie des Sciences, Paris},
  260:3262--3264, 1965.

\bibitem{sage}
W.\thinspace{}A. Stein et~al.
\newblock {\em {S}age {M}athematics {S}oftware ({V}ersion 5.11)}.
\newblock The Sage Development Team, 2013.
\newblock {\tt http://www.sagemath.org}.

\bibitem{PARI2}
{The PARI~Group}, Bordeaux.
\newblock {\em {PARI/GP version {\tt 2.7.0}}}, 2014.
\newblock available from \url{http://pari.math.u-bordeaux.fr/}.

\end{thebibliography}
\end{document}